\documentclass{amsart}
\usepackage{graphicx}
\usepackage{color}
\usepackage{subfig}
\usepackage[all]{xy}
\setcounter{page}{1} \textheight=7.5in \textwidth=5in
\usepackage{amsfonts,amssymb,amscd,amsmath,enumerate,verbatim,newlfont,calc}

 \usepackage{amsfonts}

 \newtheorem{theorem}{Theorem}[section]

\newtheorem{proposition}[theorem]{Proposition}
\theoremstyle{definition}
\newtheorem{definition}[theorem]{Definition}

 \theoremstyle{remark}
\newtheorem{remark}[theorem]{Remark}

\numberwithin{equation}{section}
\begin{document}

 \title[Unified dynamical systems on coarse spaces   ]{Unified dynamical systems on coarse spaces  }
\author[ Najmeh Khajoei ]{{  Najmeh Khajoei
}\\{Mahani Mathematical Research Center Shahid Bahonar University of Kerman, Kerman, Iran.\\e-mails:  khajuee.najmeh@yahoo.com}}

\maketitle

\begin{abstract}
In this paper, we introduce a new class of dynamical systems on a coarse space with coarse time called, coarse dynamical system. The notion of coarse conjugacy is studied from coarse geometry point of view. Coarse orbits as
invariant sets under coarse conjugacy are deduced. It is Shown that the coproduct of two coarse dynamical systems is a coarse dynamical system. Finally, we define set-value coarse dynamical systems and prove if two coarse dynamical systems are coarse conjugate, then their corresponding set-value coarse dynamical systems are coarse conjugate.
\end{abstract}

{\bf Subject Classification:}  37B99, 20E99, 54B99\\

\noindent \textit{ \bf Keywords}: coarse space; coarse group; coarse time dynamical system; unified dynamical system
\section{Introduction }
The coarse geometry is known as large-scale geometry which studies geometric objects from a large distance and large-scale features of them.
Historically, coarse geometry goes back from geometric group theory in \cite{ k9, k8, k10} and Mostow’s rigidity theorem and its generalization in \cite{k4, k5} and \cite{k6, k7} respectively. In \cite{k11} Roe considered coarse spaces as a large-scale counterpart of
uniform spaces and in \cite{k12} Nowak and Yu studied large-scale geometry of metric spaces.
 Large-scale geometry was studied on the group as well. The first attempt to study coarse groups was pioneered by Protasov in \cite{k19}, where this concept was introduced by using balleans. Also, in the same paper, he expressed coarse groups are uniquely obtained by a specific ideal of subsets of the group, which is called group ideal.
Recently coarse hyperspaces, i.e., structures induced on power sets, were inspired by metric and uniform hyperspace are studied in \cite{k21, k20}.\\
In coarse geometry, $\mathbb{Z}$ (integer number) and $\mathbb{R}$ (real number) are coarsely equivalent. This property motivates us to define a coarse time dynamical system on a coarse space in which the evolution time is a coarse group and in particular unified dynamical systems with coarse time $\mathbb{Z}$ or $\mathbb{R}$. Since from dynamical systems viewpoint, we encounter two classes of dynamical systems i.e., discrete and continuous dynamical systems our approach changes this aspect by unifying these two classes due to equivalency in coarse geometry. Therefore from this outlook, investigating features of dynamical systems does not depend on their time evolution. In the particular case, hyper semi-dynamical system is studied in \cite{mol}.\\
The paper is organized as follows. In Section $2$, we recall some needed preliminaries of the coarse geometry which include
basic definitions e.g., coarse space,  the product of coarse structures, subspace coarse structure, morphisms between coarse spaces, coarse group and asymptotic dimension. In Section $3$, a new class of dynamical system which is a coarse dynamical system with coarse time and in particular unified dynamical systems are presented. By defining conjugate coarse relation on coarse dynamical systems it is shown that
the coarse relation is an equivalence relation and preserves coarse-orbit. We prove the coproduct of two coarse dynamical systems is a coarse dynamical system. Also, we define set-value coarse dynamical systems and prove if two coarse dynamical systems are coarse conjugate, then their corresponding set-value coarse dynamical systems are coarse conjugate.

\section{Preliminaries}
In this section, some concepts in coarse geometry e.g., coarse space, coarse group and asymptotic dimension are presented. In the sequel, $\mathbb{N}, \mathbb{Z}$ and $\mathbb{R}$ denote the sets of natural numbers,
of integers and of real numbers, respectively. The following concepts are adapted from \cite{k13, k14, k11}.

\subsection{Coarse spaces} As mentioned in the previous section coarse geometry studies large-scale features
of spaces, neglecting their local, small-scale ones.  Two spaces are considered equivalent if they are the same as each other for a spectator whose point of view is getting farther and farther. In this subsection, we briefly provide some concepts which are needed in the sequel.

\begin{definition}
\cite{k11} A coarse space is a pair $(M,\mathcal{E}_M)$, where $M$ is a set and $\mathcal{E}_M\subseteq P(M\times M)$ a coarse structure on it, which means that\\
$(i)$ $\bigtriangleup_M:=\{(m,m) \mid m\in M\}\in \mathcal{E}_M$;\\
$(ii)$ $\mathcal{E}_M$ is closed under taking subsets;\\
$(iii)$ $\mathcal{E}_M$  is closed under finite unions;\\
$(iv)$ if $E\in \mathcal{E}_M$, $E^{-1}=\{(m_2, m_1) \mid (m_1,m_2)\in E\}\in \mathcal{E}_M$;\\
$(v)$ if $E, F\in \mathcal{E}_M$, then $E\circ F=\{( m_1, m_2)\in M\times M \mid\exists~ m_3\in M: (m_1, m_3)\in E, (m_3, m_2)\in F\}$.
\end{definition}
The properties $(ii)$ and $(iii)$ say that $\mathcal{E}_M$ is an ideal of subsets of $M\times M$ and an element $E$ of $\mathcal{E}_M$ is called entourage.
The subset $E[m]=\{m'\in M : (m,m')\in E\}$ is called the ball centred in $m$ with radius $E$. Furthermore, if $N\subseteq M$, then $E[N]:=\displaystyle\bigcup_{n\in N} E[n]$.\\
For a set $M$ a base of a coarse structure $\mathcal{B}$  is a family of entourages such that its completion
   $\mathcal{CL}(\mathcal{B}):=\{F\subseteq B \mid B\in \mathcal{B}\}$ is a coarse structure. Notice that $\mathcal{CL}(\mathcal{B})$ is the closure of
$\mathcal{B}$ under taking subsets.\\ Despite the fact that  for every set $M$ we can equip it with two coarse structure which are
the discrete coarse structures $\mathcal{E}_{M_{dis}}=\mathcal{CL}(\{\{\bigtriangleup_M\}\})$ and the
trivial (or indiscrete) coarse structure $\mathcal{E}_{M_{triv}}=P(M\times M) $.
There is a significant example of coarse structure for metric space which is called the metric-coarse structure. Consider an extended pseudo-metric $d$ on $M$, i.e., $d: M\times M\rightarrow \mathbb{R}^+\cup \{\infty\}$ with the following properties: \\
$(i)$ $d(m, m)=0$, for every $m\in M$;\\
$(ii)$ $d(m_1, m_2)=d(m_2, m_1)$, for every $m_1, m_2\in M$ (symmetry);\\
$(iii)$ $d(m_1, m_2)\leqslant d(m_1, m_3)+d(m_3, m_2)$; for every $m_1, m_2, m_3\in M$ (triangle inequality).\\
As usual we  make the assumption: $a+\infty=\infty+a=\infty$ and $a\leqslant \infty$ for all $a\in \mathbb{R}^+\cup\infty$.
For convenience we indicate $(M,d)$ as a metric space. For every $r>0$, by considering a specific subsets of the square $M\times M$, which is called the strip of width $r$, as follows:
$S_r=\displaystyle\bigcup_{m\in M} \big(\{m\}\times B(m,r)\big)$, where $B(m,r)$ signifies the open ball centred in $m$ with radius $r$.
The family $\mathcal{B}_d=\{S_r \mid r>0\}$ is a base of the metric coarse structure $\mathcal{E}_{M_{d}}=\mathcal{CL}(\mathcal{B}_d)$.\\
A coarse space $(M, \mathcal{E}_M)$ is metrisable if there exists an extended pseudo-metric $d$ on $M$ such that $\mathcal{E}_M=\mathcal{E}_{M_d}$.
\begin{definition}\cite{k13, k14}
Let $\{(M_i, \mathcal{E}_{M_i})\}_{i\in I}$ be a family of coarse spaces and $M=\prod_i M_i$ and $p_i: M\rightarrow M_i$, for every $i\in I$ be the projection maps.
Then the product coarse structure $\mathcal{E}=\prod_{i}\mathcal{E}_{M_{i}}$ is defined by the base
$ \mathcal{B}=\big\{\displaystyle\bigcap_{i\in I} (p_i\times p_i)^{-1}(E_i) \mid E_i\in \mathcal{E}_{M_i}, \forall i\in I \big\}$
\end{definition}
 Indeed, products and coproducts are two dual notions in category theory. In the next definition, we provide coarse structure on coproducts of coarse spaces.
\begin{definition}\label{n14}\cite{k1,k2}
Let $(M, \mathcal{E}_M)$ and $(\tilde{M}, \mathcal{E}_{\tilde{M}})$ be two coarse spaces. The coproduct coarse structure $\mathcal{E}$ on the disjoint union $M\sqcup\tilde{M}$ is defined by $\mathcal{E} :=\{(i_1\times i_1) (X)\cup(i_2\times i_2) (Y) \mid X\in \mathcal{E}_M, Y\in \mathcal{E}_{\tilde{Y}}\}$, where $i_1 : M\rightarrow M\sqcup \tilde{M}$ and $i_2 : \tilde{M}\rightarrow M\sqcup \tilde{M}$ are the canonical inclusions.
\end{definition}
One can easily extend this definition to the
coproduct of a finite number of coarse spaces. As for the infinite case, we refer to \cite{k24}.
\begin{definition}\cite{k11}
If $(M, \mathcal{E}_M)$ is a coarse space and $N\subset M$, then $N$ can be equipped with the subspace coarse structure $\mathcal{E}\mid_N =\{E\cap(N\times N) \mid E\in \mathcal{E}_M\}$. In this case, the pair $(Y, \mathcal{E}_M\mid_N)$ is called a coarse subspace.
\end{definition}
In order to define a map between two coarse spaces, the maps $f, g: K\rightarrow (M, \mathcal{E}_M)$ from a set $K$ to a coarse space $M$ are considered. We say $f$ and $g$ are close, and write $f\sim g$, if $\{(f(k), g(k)) \mid k\in K\}\in \mathcal{E}_M$. Furthermore, for the map $f:M\rightarrow N$ between sets, we denote by $f\times f: M\times M\rightarrow N\times N$ the map defined by $f\times f(m_1, m_2)=(f(m_1), f(m_2))$ for every $(m_1, m_2)\in M\times M$.
\begin{definition}\cite{ k13, k14,k2}\label{d1}
Let $(M, \mathcal{E}_M)$ and $(N, \mathcal{E}_N)$ be two coarse spaces. A map $f: M\rightarrow N$ is\\
$(i)$ bornologous if, for every $ E\in \mathcal{E}_M$, $(f\times f) (E)\in \mathcal{E}_N$;\\
$(ii)$ effectively proper if, for every $E\in \mathcal{E}_N$, $(f\times f)^{-1}(E)\in \mathcal{E}_M$;\\
$(iii)$ an asymorphism if one of the following equivalent properties is satisfied:\\
$\indent (iii_1)$ $f$ is bijective and both $f$ and $f^{-1}$ are bornologous,\\
 $\indent (iii_2)$ $f$ is bijective and both bornologous and effectively proper;\\
 $(iv)$ an asymorphic embedding if one of the following equivalent properties is satisfied:\\
 $\indent (iv_1)$ the restriction of $f$ to its image endowed with the subspace coarse structure is an asymorphism,\\
 $\indent(iv_2)$ $f$ is injective and both bornologous and effectively proper;\\
 $(v)$ a coarse equivalence if one of the following equivalent properties is satisfied:\\
 $\indent(v_1)$ $f$ is bornologous and there exists another bornologous map $g: N\rightarrow M$ (called coarse inverse) such that $g\circ f\sim Id_M$ and $f\circ g\sim Id_N$,\\
 $\indent(v_2)$ $f$ is bornologous and effectively proper and $f(M)$ is large in $N$.

\end{definition}
It is clear asymorphisms are precisely the bijective coarse equivalences.
We recall that a subset $A$ of coarse space $M$ is called large in $M$ if there exists $E\in\mathcal{E}_M$ such that $E[A]:= \displaystyle\bigcup_{a\in A} E(a)=M$ (see \cite{k15}). \\

\subsection{Coarse group}
We are interested in using groups with coarse structure as evolution time for dynamical system on coarse spaces, so some concepts of the coarse group in \cite{k13, k14, k2} are presented briefly. In fact, the coarse structures which are agreed with the algebraic structure of the groups are more remarkable.\\
 Let $G$ is a group and $g\in G$, the left-shift $f_g^l: G\rightarrow G$ and the right-shift $f_g^r: G\rightarrow G$ are defined as follows:\\
 for every $g'\in G$ $f_g^l(g')=gg'$ and $f_g^r(g')=g'g$.

 \begin{proposition}\label{n7} \cite{k13, k14} If $G$ be a group and  $\mathcal{E}_G$ be a coarse structure on it, then the following properties are equivalent:\\
 $(i)$ for every $E\in\mathcal{E}$, $GE:=\{(g e_1, ge_2) ~\vert~ g\in G, (e_1, e_2)\in E\}\in \mathcal{E}_G$;\\
 $(ii)$ the family $\mathcal{F}_g^l=\{f_g^l  ~\vert~  g\in G\}$ is uniformly bornologous, i.e., for every $E\in \mathcal{E}_G$ there exsits $F\in \mathcal{E}_G$ such that, for every $g\in G$, $(f_g^l\times f_g^l)(E)\subseteq F$.
 \end{proposition}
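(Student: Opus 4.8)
The plan is to prove the two implications by directly unwinding the definitions; the only substantive observation is that the set $GE$ of condition $(i)$ is exactly the union $\bigcup_{g\in G}(f_g^l\times f_g^l)(E)$ taken over all of $G$. Indeed, for a fixed $g\in G$ we have $(f_g^l\times f_g^l)(E)=\{(ge_1,ge_2)\mid (e_1,e_2)\in E\}$, and taking the union over $g\in G$ produces precisely $GE$. I would record this identity first, since both implications rest on it.

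For $(i)\Rightarrow(ii)$, given $E\in\mathcal{E}_G$ I would simply set $F:=GE$. By hypothesis $(i)$ we have $F\in\mathcal{E}_G$, and the identity above gives $(f_g^l\times f_g^l)(E)\subseteq GE=F$ for every $g\in G$, which is exactly the assertion that $\mathcal{F}_g^l$ is uniformly bornologous. For $(ii)\Rightarrow(i)$, given $E\in\mathcal{E}_G$ I would take the entourage $F\in\mathcal{E}_G$ supplied by $(ii)$, so that $(f_g^l\times f_g^l)(E)\subseteq F$ for all $g\in G$; then $GE=\bigcup_{g\in G}(f_g^l\times f_g^l)(E)\subseteq F$, and since a coarse structure is closed under taking subsets (axiom $(ii)$ of the definition of coarse space) we conclude $GE\in\mathcal{E}_G$.

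The step that demands the most attention — and really the whole content of the statement — is that one may not conclude $GE\in\mathcal{E}_G$ merely from each slice $(f_g^l\times f_g^l)(E)$ belonging to $\mathcal{E}_G$, because the union defining $GE$ ranges over all of $G$, whereas a coarse structure is only closed under \emph{finite} unions. The uniform entourage $F$ furnished by $(ii)$ is exactly the device that collapses this infinite union inside a single entourage, after which closure under subsets finishes the argument. I would also note in passing, though it is not needed for the equivalence, that since each $f_g^l$ is a bijection with inverse $f_{g^{-1}}^l$, condition $(ii)$ in fact forces every left-shift to be an asymorphism of $(G,\mathcal{E}_G)$.
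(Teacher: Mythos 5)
Your proof is correct and complete: the identity $GE=\bigcup_{g\in G}(f_g^l\times f_g^l)(E)$ together with closure of $\mathcal{E}_G$ under subsets is exactly what is needed, and you rightly pinpoint that the whole content lies in the fact that this union is infinite, so (ii) must supply a single entourage $F$ containing all the slices. The paper states this proposition as a cited result from the references and gives no proof of its own, so there is nothing to compare against; your argument is the standard one and needs no changes.
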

 \begin{definition}\cite{k13, k14}
  A coarse structure $\mathcal{E}_G$ on a group $G$ is said to be a left group coarse structure if it has the  equivalent properties from Proposition \ref{n7}. A left coarse group is a pair $(G, \mathcal{E}_G)$ of a group $G$ and a left group coarse structure $\mathcal{E}_G$ on $G$. Right group coarse structure and right coarse group can be defined in a similar way.
 \end{definition}

To clarify and make some examples of left(right) group coarse structures and (left) right coarse groups, the following definitions are needed.
\begin{definition}\label{n12}
\cite{k13, k14} Let $G$ be a group. A group ideal $\mathcal{I}$ is a family of subsets of $G$ containing the singleton $\{e\}$, where $e$ is the neutral element of G, such that:\\
$(i)$ $\mathcal{I}$ is an ideal;\\
$(ii)$ $HT:=\{ht~ \vert~ h\in H, t\in T\}$, for every $H, T\in \mathcal{I}$;\\
$(iii)$ $H^{-1}:=\{h^{-1}~\vert~ h\in H\}\in \mathcal{I}$.
\end{definition}
 $\bigcup \mathcal{I}$ is a subgroup of $G$ if $\mathcal{I}$ is a group ideal on $G$.
 \begin{definition}\label{n8}
 \cite{k13, k14}
 Let $G$ be a group and $\mathcal{I}$ be a group ideal. For every $H\in \mathcal{I}$, we define
 $$ E_H^l:= G(\{e\}\times H):= \displaystyle\bigcup_{g\in G} (\{g\}\times gH).$$
 \end{definition}
 The family $\mathcal{E}_{\mathcal{I}}^l:= \{ E\subseteq G\times G \mid \exists H\in\mathcal{I} : E\subseteq E_H^l\}$ is a left coarse group structure which is called left $\mathcal{I}$-group coarse structure, and the pair $(G, \mathcal{E}_G^l)$ is a left coarse group that is called, left $\mathcal{I}$-coarse group. \\
 Pay attention the family $\{E_H^l \mid H\in \mathcal{I}\}$ is a base of the $\mathcal{I}$-group coarse structure. In addition, for every $H\in\mathcal{I}$ and $g\in G$, $E_H^l[g]=g H$.\\
  Analogously, the right $\mathcal{I}$-group coarse structure $\mathcal{E}_{\mathcal{I}}^r$ is defined by the base $\{E_H^r \vert H\in \mathcal{I}\}$, where
 $$ E_H^l:= (\{e\})\times H)G:= \displaystyle\bigcup_{g\in G} (\{g\}\times Hg).$$
The following proposition indicates the left $\mathcal{I}$-group coarse structure and the right $\mathcal{I}$-group coarse structure are equivalent for every group $G$ and group ideal $\mathcal{I}$ on it.
 \begin{proposition}\cite{k13, k14}
 If $G$ be a group, $\mathcal{I}$ be a group ideal, and $j: G\rightarrow G$ defined by $j(g):=g^{-1}$, then $J: (G, \mathcal{E}_{\mathcal{I}}^l) \rightarrow (G, \mathcal{E}_{\mathcal{I}}^r)$ is an asymorphism.
 \end{proposition}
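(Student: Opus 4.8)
The plan is to verify property $(iii_1)$ of Definition \ref{d1} directly: that $j$ is a bijection and that both $j$ and its inverse are bornologous. Bijectivity is free, since $j\circ j=\mathrm{Id}_G$, so $j$ is its own inverse; in particular it will be enough to prove that $j$ is bornologous as a map $(G,\mathcal{E}_{\mathcal{I}}^l)\to(G,\mathcal{E}_{\mathcal{I}}^r)$, because running the same argument with the roles of ``left'' and ``right'' interchanged then shows that $j^{-1}=j$ is bornologous as a map $(G,\mathcal{E}_{\mathcal{I}}^r)\to(G,\mathcal{E}_{\mathcal{I}}^l)$.

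The main step is a computation on basic entourages. Since $\{E_H^l\mid H\in\mathcal{I}\}$ is a base of $\mathcal{E}_{\mathcal{I}}^l$ and $\mathcal{E}_{\mathcal{I}}^r$ is closed under taking subsets, I would only need to check that $(j\times j)(E_H^l)\in\mathcal{E}_{\mathcal{I}}^r$ for each $H\in\mathcal{I}$. Unwinding $E_H^l=\bigcup_{g\in G}(\{g\}\times gH)$ gives
\[
(j\times j)(E_H^l)=\{(g^{-1},(gh)^{-1})\mid g\in G,\ h\in H\}=\{(g^{-1},h^{-1}g^{-1})\mid g\in G,\ h\in H\},
\]
and reindexing by $g'=g^{-1}$ (which ranges over all of $G$ as $g$ does) rewrites this as $\bigcup_{g'\in G}(\{g'\}\times H^{-1}g')=E_{H^{-1}}^r$. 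By property $(iii)$ of Definition \ref{n12} the group ideal $\mathcal{I}$ is closed under inversion, so $H^{-1}\in\mathcal{I}$ and hence $E_{H^{-1}}^r$ is a basic entourage of $\mathcal{E}_{\mathcal{I}}^r$; thus $(j\times j)(E_H^l)\in\mathcal{E}_{\mathcal{I}}^r$. For an arbitrary $E\in\mathcal{E}_{\mathcal{I}}^l$ one then picks $H\in\mathcal{I}$ with $E\subseteq E_H^l$, whence $(j\times j)(E)\subseteq E_{H^{-1}}^r$ and $(j\times j)(E)\in\mathcal{E}_{\mathcal{I}}^r$, proving that $j$ is bornologous.

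Finally, the left-right symmetric version of this computation, together with $(H^{-1})^{-1}=H$, shows that $(j\times j)(E_H^r)=E_{H^{-1}}^l\in\mathcal{E}_{\mathcal{I}}^l$, so $j^{-1}$ is bornologous as well, and $j$ is an asymorphism. I do not expect any genuine obstacle here; the only points needing care are the bookkeeping in the substitution $g\mapsto g^{-1}$ and the recognition that closure of $\mathcal{I}$ under inversion is precisely what turns the $(j\times j)$-image of a basic left entourage into a basic right one.
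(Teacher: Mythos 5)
Your proof is correct: the key computation $(j\times j)(E_H^l)=E_{H^{-1}}^r$ together with closure of $\mathcal{I}$ under inversion and under taking subsets is exactly what is needed, and the left--right symmetry plus $j\circ j=\mathrm{Id}_G$ cleanly handles the inverse. The paper states this proposition as a cited result from the references without supplying a proof, so there is nothing to compare against; your argument is the standard verification and has no gaps.
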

 The fundamental result in this subsection is the following proposition which indicates every left coarse group can be achieved as Definition \ref{n8}
 \begin{proposition}\cite{k16}\label{n9}
Let $G$ be a group and $\mathcal{E}_G$ be a coarse structure on it. Then the following properties are equivalent:\\
$(i)$ $(G, \mathcal{E}_G)$ is a left coarse group;\\
$(ii)$ $\mathcal{E}_G=\mathcal{E}_{\mathcal{I}}^l$ where $\mathcal{I}:=\{E[e] : E\in \mathcal{E}_G\}$.
 \end{proposition}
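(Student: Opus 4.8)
The plan is to establish the two implications separately, with almost all of the work concentrated in $(i)\Rightarrow(ii)$; I would dispatch $(ii)\Rightarrow(i)$ first, as a short direct check. Assume $\mathcal{E}_G=\mathcal{E}_{\mathcal{I}}^l$ and take $E\in\mathcal{E}_G$; choose $H\in\mathcal{I}$ with $E\subseteq E_H^l$. Since each left shift sends $\{g\}\times gH$ onto $\{g'g\}\times g'gH$ and $g'g$ again ranges over all of $G$, one gets $GE_H^l=E_H^l$, hence $GE\subseteq GE_H^l=E_H^l$; as $\mathcal{E}_G=\mathcal{E}_{\mathcal{I}}^l$ is closed under taking subsets this yields $GE\in\mathcal{E}_G$, which is exactly condition $(i)$ of Proposition \ref{n7}, so $(G,\mathcal{E}_G)$ is a left coarse group. (Alternatively one may invoke the remark after Definition \ref{n8}, that $\mathcal{E}_{\mathcal{I}}^l$ is a left group coarse structure once $\mathcal{I}$ is a group ideal.)

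For $(i)\Rightarrow(ii)$ I would first verify that $\mathcal{I}:=\{E[e]\mid E\in\mathcal{E}_G\}$ is a group ideal, so that $\mathcal{E}_{\mathcal{I}}^l$ is meaningful. That $\{e\}\in\mathcal{I}$ is immediate from $\bigtriangleup_G\in\mathcal{E}_G$; closure under subsets follows by passing from $E$ to $E\cap(\{e\}\times H')$ for the prescribed $H'\subseteq E[e]$; and closure under finite unions is $(E_1\cup E_2)[e]=E_1[e]\cup E_2[e]$. The two algebraic axioms are precisely where the left coarse group hypothesis ``$GE\in\mathcal{E}_G$ for every $E\in\mathcal{E}_G$'' enters: given $H=E[e]$ and $T=F[e]$, for $h\in H$ and $t\in T$ the pair $(e,t)\in F$ left-translated by $h$ gives $(h,ht)\in GF$, hence $(e,ht)\in E\circ GF$ and so $HT\subseteq(E\circ GF)[e]$; similarly $(h,e)\in E^{-1}$ translated by $h^{-1}$ gives $(e,h^{-1})\in GE^{-1}$, so $H^{-1}\subseteq(GE^{-1})[e]$. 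Since $E\circ GF$ and $GE^{-1}$ lie in $\mathcal{E}_G$ and $\mathcal{I}$ is already closed under subsets, $HT\in\mathcal{I}$ and $H^{-1}\in\mathcal{I}$.

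It then remains to prove $\mathcal{E}_G=\mathcal{E}_{\mathcal{I}}^l$. For $\mathcal{E}_G\subseteq\mathcal{E}_{\mathcal{I}}^l$, given $E\in\mathcal{E}_G$ set $H:=(GE)[e]\in\mathcal{I}$; for any $(a,b)\in E$, left-translating by $a^{-1}$ gives $(e,a^{-1}b)\in GE$, i.e.\ $a^{-1}b\in H$, so $(a,b)\in\{a\}\times aH\subseteq E_H^l$, whence $E\subseteq E_H^l$ and $E\in\mathcal{E}_{\mathcal{I}}^l$. For $\mathcal{E}_{\mathcal{I}}^l\subseteq\mathcal{E}_G$, since $\mathcal{E}_G$ is closed under subsets it is enough to check that $E_H^l\in\mathcal{E}_G$ for each $H\in\mathcal{I}$; writing $H=F[e]$ with $F\in\mathcal{E}_G$ and putting $F_0:=F\cap(\{e\}\times G)\in\mathcal{E}_G$, a direct computation gives $GF_0=\bigcup_{g\in G}\big(\{g\}\times gH\big)=E_H^l$, and $GF_0\in\mathcal{E}_G$ by Proposition \ref{n7}$(i)$.

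I expect the main obstacle to be the bookkeeping in the two places where left translation has to be pushed around the fixed base point $e$: the product-closure $HT\in\mathcal{I}$ and the identity $GF_0=E_H^l$. Both are conceptually simple but easy to mis-index; once the translations are aligned so that ``balls at $e$'' and ``balls at an arbitrary $g$'' match up, the equivalence falls out, and everything else (the remaining ideal axioms, the subset closures, the use of composition $\circ$, and the reverse implication) is routine.
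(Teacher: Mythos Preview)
Your argument is correct in all details: the verification that $\mathcal{I}$ is a group ideal, the two inclusions $\mathcal{E}_G\subseteq\mathcal{E}_{\mathcal{I}}^l$ and $\mathcal{E}_{\mathcal{I}}^l\subseteq\mathcal{E}_G$, and the easy direction $(ii)\Rightarrow(i)$ are all handled cleanly, and the two ``bookkeeping'' spots you flag (the identity $GF_0=E_H^l$ and the containment $HT\subseteq(E\circ GF)[e]$) go through exactly as you describe.

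Note, however, that the paper does not supply its own proof of this proposition: it is stated with a citation to \cite{k16} and used as a black box. There is therefore nothing in the paper to compare your proof against. What you have written is essentially the standard argument one finds in the cited source (Nicas--Rosenthal), so in that sense your approach matches the intended one.
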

 In view of Proposition \ref{n9} coarse groups are equivalently determined by group ideals and this proposition satisfies for right coarse structures.
  In the sequel, left group coarse structures (and left coarse groups), are referred to as group coarse structures (and coarse groups).
  In the next ramark some examples of coarse groups are provided.
  \begin{remark}\cite{k13, k14}\label{n10}
  Let $G$ be a group. \\
  $(i)$ There is two trivial groups ideal $\{\{e\}\}$ and $P(G)$. $\{\{e\}\}$-group coarse structure is called  the discrete coarse structure, which is containing only the subsets of the diagonal and group ideal $P(G)$, creates the bounded coarse structure that means every subset of $G \times G$ is an entourage. We recall that, for a coarse space $(M, \mathcal{E}_M)$ and $m \in M$, a subset $N$ of $M$ is bounded from $m$ if there exists an entourage $E\in \mathcal{E}_M$ such that $N\subseteq E[m] $. A subset is called bounded  if it is bounded from a point.\\
  $(ii)$ The family $[G]^{<\omega} $ contain all finite subsets of $G$ is a group ideal. $[G]^{<\omega}$-coarse structure is called finitary-group coarse structure.\\
  $(iii)$ If $d$ be a left-invariant pseudo-metric on $G$, the family $\mathcal{B}_d:= \{H\subseteq G \mid\exists R\geqslant 0 : H\subseteq B_d(e, R)\}$ is a group ideal and $\mathcal{B}_d$-group coarse structure is called metric-group coarse structure. Note that $\mathcal{E}_{\mathcal{B}_d}=\mathcal{E}_{G{_d}}$.
  \end{remark}

\subsection{Asymptotic dimension}
One of the extremely significant coarse invariant is the asymptotic dimension that associates a value to coarse spaces which is invariant under coarse equivalences. It is the large scale
counterpart of the classical $\breve{C}$ech-Lebesgue covering dimension which was introduced by Gromov \cite{k17}.
We provide the necessary outline about the notion of asymptotic
dimension of coarse spaces.

\begin{definition}\cite{k13}
A family $\mathcal{U}$ of subsets of a coarse space $(M, \mathcal{E}_M)$ is uniformly bounded if there exists $E\in \mathcal{E}_M$ such that, for every $U\in \mathcal{U}$ and $u \in U, U\subseteq E[u]$.
\end{definition}

\begin{definition}\cite{k11}
A coarse space $(M, \mathcal{E}_M)$ has asymptotic dimension at most $n\in \mathbb{N}$, where $n\in \mathbb{N}$ and we write $asdim(M, \mathcal{E}_M) \leqslant n$, if, for every $E\in \mathcal{E}_M$ there exists a uniformly bounded cover $\mathcal{U}=\mathcal{U}_0\cup...\cup\mathcal{U}_n$ such that, for every $i =0, ..., n$ and every $U, V\in \mathcal{U}_i$, $E[U]\cap V=\varnothing$ (i.e., $\mathcal{U}_i$ is $E$-separated, for every $i =0, ..., n)$. If $asdim(M, \mathcal{E}_M) \leqslant n$ and $asdim(M, \mathcal{E}_M) >n -1$, we write $asdim(M, \mathcal{E}_M) =n$, where $n \in \mathbb{N}$. Finally, if, for every $n \in \mathbb{N}$, $asdim(M, \mathcal{E}_M) >n$, then $asdim(M, \mathcal{E}_M) =\infty$.
\end{definition}
There are the similar definitions for uniformly bounded and asymptotic dimension of coarse groups which are presented in the two following definitions.
\begin{definition}\cite{k13}
A family $\mathcal{U}$ of subsets of a coarse group $(G, \mathcal{E}_{\mathcal{I}})$ is uniformly bounded if and only if there exists $H\in \mathcal{I}$ such that $U\subseteq E _{H}[u] =uH$, for every $U\in \mathcal{U}$ and $u \in U$.
\end{definition}
It is said that $\mathcal{U}$ is $S$-disjoint if, for every pair of distinct elements $U, V\in \mathcal{U}$, $U\cap E_{S}[V] =U\cap VS=\varnothing$ where $S\in \mathcal{I}$ (see \cite{k13}).
\begin{definition}\cite{k13}
A coarse group $(G, \mathcal{E}_I)$ has asymptotic dimension at most $n$ (asdim$(G, \mathcal{E}_{\mathcal{I}}) \leqslant n$), where $n \in \mathbb{N}$, if, for every $S\in\mathcal{I}$, there exists a uniformly bounded cover $\mathcal{U}=\mathcal{U}_0\cup... \cup\mathcal{U}_n$ such that, for every $i =0, ..., n$, $\mathcal{U}_i$ is $S$-disjoint. The asymptotic dimension of $(G, \mathcal{E}_{\mathcal{I}})$ is $n$ if asdim$(G, \mathcal{E}_{\mathcal{I}}) \leqslant n$ and asdim$(G, \mathcal{E}_{\mathcal{I}}) >n -1$. Finally, asdim$(G,\mathcal{E}_\mathcal{I})=\infty$ if, for every $n \in \mathbb{N}$, asdim$(G, \mathcal{E}_{\mathcal{I}}) >n$.
\end{definition}

\begin{theorem}\label{n11}
\cite{k18}
Let $G$ be an abelian group. Then $asdim G = r_{0}(G)$,
where $G$ is endowed with the finitary-group coarse structure .
\end{theorem}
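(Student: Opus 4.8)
\emph{Proof idea.} I would split the argument into a reduction to finitely generated subgroups, the classification of those subgroups, and a final assembly; the genuinely geometric content is the classical identity $asdim \mathbb{Z}^{k}=k$.

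\emph{Reduction to finitely generated subgroups.} The first step is to show that, for the finitary-group coarse structure, $asdim G=\sup\{asdim H : H\leqslant G \text{ finitely generated}\}$, where $H$ carries the subspace coarse structure. From Definition~\ref{n8} one computes $E_{A}^{l}\cap(H\times H)=E_{A\cap H}^{l}$, so the subspace coarse structure on a subgroup $H$ is again its finitary-group coarse structure; the inequality $\geqslant$ is then the standard monotonicity of asymptotic dimension under coarse subspaces. For $\leqslant$ it suffices to verify the defining condition on a base, so fix a basic entourage $E_{A}^{l}$ with $A$ finite and set $H=\langle A\rangle$. Write $G$ as a disjoint union of left cosets $G=\bigsqcup_{i}g_{i}H$; each coset is an asymorphic copy of $H$ under the left translation $x\mapsto g_{i}x$, and by left-invariance of the finitary structure one fixed finite bound $B\subseteq H$ works on all of them. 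Take on each coset the $g_{i}$-translate of an $A$-disjoint, $B$-bounded cover in $n+1$ colours witnessing $asdim H\leqslant n$, and let $\mathcal{U}_{j}$ be the union over $i$ of the $j$-th pieces; then $\mathcal{U}=\mathcal{U}_{0}\cup\dots\cup\mathcal{U}_{n}$ is a $B$-bounded cover of $G$, and each $\mathcal{U}_{j}$ stays $A$-disjoint because $A\subseteq H$ forbids translation by $A$ from crossing between distinct cosets. Hence $asdim G\leqslant n$ as soon as every finitely generated subgroup has dimension $\leqslant n$.

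\emph{Finitely generated abelian groups.} If $H$ is finitely generated abelian, the structure theorem gives $H\cong\mathbb{Z}^{k}\oplus T$ with $T$ finite and $k=r_{0}(H)$; since $T$ is bounded, $H$ is coarsely equivalent to $\mathbb{Z}^{k}$, so $asdim H=asdim \mathbb{Z}^{k}$. By Remark~\ref{n10}(iii) the finitary-group coarse structure of $\mathbb{Z}^{k}$ equals the metric-group coarse structure of the word metric (balls in $\mathbb{Z}^{k}$ are finite), i.e.\ the usual coarse structure of $\mathbb{Z}^{k}$, for which $asdim \mathbb{Z}^{k}=k$: the bound $\leqslant k$ follows from the standard colouring of $\mathbb{Z}^{k}$ by $k+1$ shifted sublattices, and the bound $\geqslant k$ is the substantial part (a $k$-coloured uniformly bounded separated family cannot cover $\mathbb{Z}^{k}$ for large entourages). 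Thus $asdim H=r_{0}(H)$ for all finitely generated abelian $H$.

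\emph{Assembly.} Finally $r_{0}(G)=\sup\{r_{0}(H):H\leqslant G \text{ finitely generated}\}$: independence passes to subgroups, giving $r_{0}(H)\leqslant r_{0}(G)$, while any $n$ independent elements of $G$ generate a copy of $\mathbb{Z}^{n}$, so the supremum is attained when $r_{0}(G)$ is finite and is infinite precisely when $r_{0}(G)$ is (matching the convention $r_{0}(G)=\infty$ in the statement). Combining the three steps, $asdim G=\sup_{H}asdim H=\sup_{H}r_{0}(H)=r_{0}(G)$. I expect the reduction lemma to be the main obstacle, since one must ensure the coset-wise covers glue to a cover that is uniformly bounded simultaneously over all --- possibly infinitely many --- cosets, which is exactly where left-invariance of the finitary coarse structure enters; the classical lower bound $asdim \mathbb{Z}^{k}\geqslant k$ is the other indispensable ingredient.
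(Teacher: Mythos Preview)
The paper does not supply its own proof of this theorem: it is quoted verbatim from Dranishnikov--Smith \cite{k18} and immediately followed only by the reminder of what $r_{0}(G)$ means, so there is no in-paper argument to compare your proposal against.

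That said, your outline is essentially the strategy of \cite{k18}. The reduction step is correct and is exactly where the finitary hypothesis is used: a basic entourage $E_{A}^{l}$ has $A$ finite, so $H=\langle A\rangle$ is finitely generated, and since $A\subseteq H$ the $A$-neighbourhood of any coset $g_{i}H$ stays inside that coset, which is what makes the colour classes assembled from the coset copies remain $A$-disjoint. Your verification that the subspace coarse structure on a subgroup coincides with its own finitary-group coarse structure is also right. The remaining ingredients---coarse equivalence $\mathbb{Z}^{k}\oplus T\simeq\mathbb{Z}^{k}$ for finite $T$, the classical identity $asdim\,\mathbb{Z}^{k}=k$, and the elementary fact that $r_{0}$ is the supremum of the free ranks of finitely generated subgroups---are standard, and you have flagged the genuinely nontrivial one (the lower bound $asdim\,\mathbb{Z}^{k}\geqslant k$) appropriately.
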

Let us recall that for an abelian group $G$, $r_0(G)$, is its free-rank which is the cardinality of the maximal
independent subset of $G$.

\section{Coarse time dynamical systems on coarse space }
In this section, we introduce dynamical systems on a coarse space with coarse time evolution. In coarse geometry, $\mathbb{Z}$  and $\mathbb{R}$ are coarsely equivalent groups. Because of this property
we call the dynamical system on a coarse space, with the time evolution of $\mathbb{Z}$ or $\mathbb{R}$, a unified dynamical system on coarse apace.
 We define coarse conjugacy and coarse orbits for this new class of dynamical system. Also, it is shown that the coproduct of two coarse dynamical systems is a coarse dynamical system.

\begin{definition}\label{n2}
A coarse dynamical system with coarse time is a triple \\$[(M,\mathcal{E}_M), \phi, (G,\mathcal{E}_G,\ast)]$, where $(M,\mathcal{E}_M)$ is a non-empty coarse space, $(G,\mathcal{E}_G,\ast)$ is a coarse group and $\phi$ is a set of asymorphism mappings $\phi^g: M\rightarrow M$ where $g\in G$ with the following properties:
\\
$(i)$ If $g_1, g_2 \in G$, and $m\in M$, then $\phi^{g_1}\circ\phi^{g_2}\in \phi^{g_1\ast g_2} (m)$, where $\phi^{g_1\ast g_2}(m)=\{ \phi^g(m): g\in g_1\ast g_2\}$; \\
$(ii)$ $\phi^e=Id_{M}$ where $e$ is the identity element of $G$.
\end{definition}
We call the mapping $\phi^g: M\rightarrow M$ a coarse evolution operator.
\begin{remark}
$(i)$ If $G=\mathbb{Z}$ or $G=\mathbb{R}$ we call a triple $[(M,\mathcal{E}_M), \phi, (G,\mathcal{E}_G,\ast)]$, unified dynamical system.\\
Let us clarify this denomination from coarse geometry point of view.
If we consider $\mathbb{Z}$ and $\mathbb{R}$ as a metric space with usual metric $d(a, b)=\vert a-b\mid$ is left-invariant, then $\mathbb{Z}$ and $\mathbb{R}$ are coarsely equivalent. It is enough to take the inclusion $i : \mathbb{Z}\hookrightarrow \mathbb{R}$ and the floor map
$ \lfloor .\rfloor:\mathbb{R}\rightarrow \mathbb{Z}$ such that, for every $r\in \mathbb{R}$, $\lfloor r\rfloor=max \{z\in \mathbb{Z} : z\leqslant r\}$. If we look at $\mathbb{Z}$ and $\mathbb{R}$ as a group with the binary operation $``+"$
 by the statement $(iii)$ of Remark \ref{n10} they are coarsely equivalent with their group-coarse structure.
 In fact, $\mathbb{Z}$ and $\mathbb{R}$ are similar to each other from the coarse geometry point of view.
One can imagine $\mathbb{Z}$ whose points get closer and closer until the whole space will simulate a line which is $\mathbb{R}$. \\
  We ask the reader to pay attention to this point if we consider the finitary-group coarse structure on $\mathbb{Z}$ and $\mathbb{R}$ the situation is very different, as the two coarse groups are not coarsely equivalent. Since by Theorem \ref{n11} $r_0(\mathbb{Z})=1$, while $r_0(\mathbb{R})=\infty$, and therefore they can not be coarsely equivalent.\\
  $(ii)$
Consider a coarse dynamical system $[(M,\mathcal{E}_M), \phi, (G, \mathcal{E}_G,\ast)]$. Let $N\subset M$ and $H\leqslant G$ i.e. $H$ is a subgroup of $G$. Then we can make a sub-coarse dynamical system $[(N,\mathcal{E}_N), \phi\mid_{N}, (H, \mathcal{E}_H,\ast)]$ by taking subspace coarse structure on $N$ and $H$.
\end{remark}

\begin{definition}\label{n1}
Two coarse dynamical systems $[(M,\mathcal{E}_M), \phi, (G, \mathcal{E}_G,\ast)]$ and  \\$[(\tilde{M},\mathcal{E}_{\tilde{M}}), \tilde{\phi}, (\tilde{G}, \mathcal{E}_{\tilde{G}},\tilde{\ast})]$ are called $(f,h)$-coarse conjugate if there exist an asymorphism $f: (M, \mathcal{E}_M)\rightarrow (\tilde{M}, \mathcal{E}_{\tilde{M}})$ between coarse spaces and an asymorphism $h: (G, \mathcal{E}_G)\rightarrow (\tilde{G}, \mathcal{E}_{\tilde{G}}) $ such that the following two axioms hold.\\
$(i)$ $h(g_1\ast g_2)=h(g_1)\tilde{\ast} h(g_2)$ for all $g_1, g_2\in G$;\\
$(ii)$ $f\circ \phi^g=\tilde{\phi}^{h(g)}\circ f$ for all $g\in G$ (see the following diagram).
\begin{displaymath}
\xymatrix{
M \ar[r]^{\phi^g} \ar[d]_f &
M \ar[d]^{f} \\
\tilde{M} \ar[r]_{\tilde{\phi}^{h(g)}} & \tilde{M} }
\end{displaymath}
\end{definition}
The next theorem implies that conjugate coarse relation is an equivalence relation on coarse dynamical systems.
\begin{theorem}
Let $(f, h)$ be a coarse conjugate relation between $[(M,\mathcal{E}_M), \phi, (G, \mathcal{E}_G,\ast)]$ and  $[(\tilde{M},\mathcal{E}_{\tilde{M}}), \tilde{\phi}, (\tilde{G}, \mathcal{E}_{\tilde{G}},\tilde{\ast})]$, and let $(k, l)$ be a coarse conjugate relation between $[(\tilde{M},\mathcal{E}_{\tilde{M}}), \tilde{\phi}, (\tilde{G}, \mathcal{E}_{\tilde{G}},\tilde{\ast})]$ and  $[(\bar{M},\mathcal{E}_{\bar{M}}), \bar{\phi}, (\bar{G}, \mathcal{E}_{\bar{G}},\bar{\ast})]$. Then\\
$(i)$ $(f^{-1}, h^{-1})$ is a coarse conjugate relation between $[(\tilde{M},\mathcal{E}_{\tilde{M}}), \tilde{\phi}, (\tilde{G}, \mathcal{E}_{\tilde{G}},\tilde{\ast})]$ and  $[(M,\mathcal{E}_M), \phi, (G, \mathcal{E}_G,\ast)] $.\\
$(ii)$ $(k\circ f, l\circ h)$ is a coarse conjugate relation between $[(M,\mathcal{E}_M), \phi, (G, \mathcal{E}_G,\ast)] $ and $[(\tilde{M},\mathcal{E}_{\tilde{M}}), \tilde{\phi}, (\tilde{G}, \mathcal{E}_{\tilde{G}},\tilde{\ast})]$.
\end{theorem}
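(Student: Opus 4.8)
The plan is to verify directly that the pairs $(f^{-1},h^{-1})$ and $(k\circ f,l\circ h)$ meet the requirements of Definition \ref{n1}: each first component is an asymorphism of coarse spaces, each second component is an asymorphism of coarse groups, and axioms $(i)$ and $(ii)$ hold. I will treat the two items in order. The one structural fact used throughout is that a bijective map $h$ satisfying $h(g_1\ast g_2)=h(g_1)\tilde\ast h(g_2)$ is a group isomorphism, so that $h^{-1}$ is again a group homomorphism (write $\tilde g_i=h(g_i)$ and apply $h^{-1}$ to $h(g_1)\tilde\ast h(g_2)=h(g_1\ast g_2)$).

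For $(i)$: by Definition \ref{d1}$(iii_1)$, $f$ being an asymorphism means $f$ is bijective and both $f$ and $f^{-1}$ are bornologous; read symmetrically, $f^{-1}$ is bijective and both $f^{-1}$ and $(f^{-1})^{-1}=f$ are bornologous, so $f^{-1}$ is an asymorphism, and likewise $h^{-1}$. Axiom $(i)$ for $(f^{-1},h^{-1})$ says precisely that $h^{-1}\colon(\tilde G,\tilde\ast)\to(G,\ast)$ is a homomorphism, which is the isomorphism remark above. For axiom $(ii)$, start from $f\circ\phi^g=\tilde\phi^{h(g)}\circ f$; pre-composing with $f^{-1}$ gives $\phi^g=f^{-1}\circ\tilde\phi^{h(g)}\circ f$, and post-composing with $f^{-1}$ gives $\phi^g\circ f^{-1}=f^{-1}\circ\tilde\phi^{h(g)}$; substituting $g=h^{-1}(\tilde g)$ and using surjectivity of $h$ yields $f^{-1}\circ\tilde\phi^{\tilde g}=\phi^{h^{-1}(\tilde g)}\circ f^{-1}$ for all $\tilde g\in\tilde G$, as required.

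For $(ii)$: the composite $k\circ f$ is a bijection, and it is an asymorphism because bornologousness is preserved under composition: for $E\in\mathcal{E}_M$ one has $\big((k\circ f)\times(k\circ f)\big)(E)=(k\times k)\big((f\times f)(E)\big)\in\mathcal{E}_{\bar M}$, and the same argument applied to $f^{-1}$ and $k^{-1}$ shows $(k\circ f)^{-1}=f^{-1}\circ k^{-1}$ is bornologous; similarly $l\circ h$ is an asymorphism of coarse groups. Axiom $(i)$ follows from the chain $(l\circ h)(g_1\ast g_2)=l\big(h(g_1)\tilde\ast h(g_2)\big)=l(h(g_1))\bar\ast l(h(g_2))=(l\circ h)(g_1)\bar\ast(l\circ h)(g_2)$, using axiom $(i)$ for $(f,h)$ and then for $(k,l)$. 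Axiom $(ii)$ follows from $(k\circ f)\circ\phi^g=k\circ(\tilde\phi^{h(g)}\circ f)=(\bar\phi^{l(h(g))}\circ k)\circ f=\bar\phi^{(l\circ h)(g)}\circ(k\circ f)$, again applying axiom $(ii)$ of the two given relations in turn. Together with the trivial observation that $(Id_M,Id_G)$ conjugates a system to itself, parts $(i)$ and $(ii)$ give reflexivity, symmetry and transitivity, hence coarse conjugacy is an equivalence relation.

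I do not anticipate a real obstacle: the argument is a diagram chase. The only step needing a moment's care is checking that the new first and second components are still asymorphisms — for the inverses this is merely re-reading condition $(iii_1)$ of Definition \ref{d1} symmetrically, and for the composites it is the routine fact that both bornologousness and its analogue for the inverse map are stable under composition. Everything else is change of variable and associativity of composition.
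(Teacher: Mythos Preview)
Your proof is correct and follows essentially the same route as the paper's: a direct verification of the axioms in Definition~\ref{n1} for $(f^{-1},h^{-1})$ and for $(k\circ f,l\circ h)$. If anything, you are more careful than the paper, which omits the explicit check that $k\circ f$ and $l\circ h$ are asymorphisms and does not spell out the group-isomorphism observation for $h^{-1}$.
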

\begin{proof}
Proof (i). Since $f$  and $h$ are asymorphisms. It is enough to prove two properties in Definition \ref{n1} are satisfied.\\ If $\tilde{g}, \tilde{s}\in\tilde{G}$, then
$h^{-1}(\tilde{g}\title{\ast}\tilde{s})=h^{-1}(h(g\ast s))=h^{-1} (h(g)\tilde{\ast}h(s))=h^{-1}(h(g))\ast h^{-1}(h(s))=h^{-1}(\tilde{g})\ast h^{-1}(\tilde{s})$.\\
For $\tilde{g}\in\tilde{G}$ we have $f\circ\phi^{h^{-1}(\tilde{g})}\circ f^{-1}=\tilde{\phi}^{h\circ h^{-1}(\tilde{g})}\circ f\circ f^{-1}=\tilde{\phi}^{\tilde{g}}$. Therefore $\phi^{h^{-1}(\tilde{g})}\circ f^{-1}=f^{-1}\circ\tilde{\phi}^{\tilde{g}}$.\\
Proof (ii). $ l\circ h(g_1\ast g_2)=l(h(g_1\ast g_2))=l(h(g_1)\tilde{\ast} h(g_2))=l\circ h(g_1) \bar{\ast} l\circ h(g_2)$ for $g_1, g_2 \in G$ and\\
$\bar{\phi}^{l\circ h(g)}\circ (k\circ f)=k\circ\tilde{\phi}^{h(g)}\circ f=k\circ (f\circ\phi^g)=(k\circ f)\phi^g$.
\end{proof}

\begin{definition}
The set $O^G(m)=\{\phi^g(m): g\in G\}$ is called coarse-orbit for $m\in M$.
\end{definition}
It is clear $O^{G(m)}$ is a coarse group. Because for any $\phi^{g}(m), \phi^{\tilde{g}}(m)\in O^{G}(m)$ we have $\phi^{g}(m) \ast \phi^{{\tilde{g}}^{-1}}(m)=\phi^{g\ast\tilde{g}^{-1}}(m)\in O^G(m)$ where $g, \tilde{g}\in G$. So as we mentioned in Section $(2)$ it can be equipped to a coarse structure.\\
In the next theorem we show the coarse conjugate relation preserves coarse-orbit.
\begin{theorem}
If $[(M,\mathcal{E}_M), \phi, (G, \mathcal{E}_G,\ast)]$ and  $[(\tilde{M},\mathcal{E}_{\tilde{M}}), \tilde{\phi}, (\tilde{G}, \mathcal{E}_{\tilde{G}},\tilde{\ast})]$ are coarse conjugate under a coarse conjugate $(f,h)$, then $f(O^G(m))=O^{\tilde{G}}(f(m))$.
\end{theorem}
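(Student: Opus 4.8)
The plan is to prove the set equality $f(O^G(m)) = O^{\tilde G}(f(m))$ by double inclusion, using only the defining property (ii) of coarse conjugacy, namely $f\circ\phi^g = \tilde\phi^{h(g)}\circ f$, together with the fact that $h$ is an asymorphism and, in particular, a bijection $G\to\tilde G$.

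First I would unwind the definitions. By definition $O^G(m) = \{\phi^g(m) : g\in G\}$, so $f(O^G(m)) = \{f(\phi^g(m)) : g\in G\}$. Applying axiom (ii) of Definition \ref{n1} pointwise at the element $m$ gives $f(\phi^g(m)) = \tilde\phi^{h(g)}(f(m))$ for every $g\in G$. Hence
\[
f(O^G(m)) = \{\tilde\phi^{h(g)}(f(m)) : g\in G\}.
\]
This already yields the inclusion $f(O^G(m))\subseteq O^{\tilde G}(f(m))$, since each $h(g)$ is an element of $\tilde G$ and therefore $\tilde\phi^{h(g)}(f(m))\in\{\tilde\phi^{\tilde g}(f(m)):\tilde g\in\tilde G\} = O^{\tilde G}(f(m))$.

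For the reverse inclusion, I would use that $h$ is surjective (being an asymorphism, hence a bijection). Given an arbitrary element $\tilde\phi^{\tilde g}(f(m))$ of $O^{\tilde G}(f(m))$, pick $g\in G$ with $h(g)=\tilde g$; then $\tilde\phi^{\tilde g}(f(m)) = \tilde\phi^{h(g)}(f(m)) = f(\phi^g(m)) \in f(O^G(m))$, again by axiom (ii). Combining the two inclusions gives the claimed equality.

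I do not expect a serious obstacle here; the only point requiring mild care is that the identity $f\circ\phi^g = \tilde\phi^{h(g)}\circ f$ in Definition \ref{n1} is an equality of maps $M\to\tilde M$, so one is entitled to evaluate both sides at the fixed point $m$, and surjectivity of $h$ is exactly what converts the set $\{h(g):g\in G\}$ into all of $\tilde G$. If one additionally wanted the equality to respect the coarse-group structures carried by the orbits (as remarked after the definition of coarse-orbit), one would note that $f$ restricted to $O^G(m)$ is an asymorphism onto its image $O^{\tilde G}(f(m))$, since $f$ is an asymorphism and restrictions of asymorphisms to subspaces endowed with the subspace coarse structure are again asymorphisms; but for the statement as written, the set-theoretic equality is all that is required.
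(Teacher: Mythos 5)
Your proof is correct and follows essentially the same route as the paper: both establish $f(O^G(m))\subseteq O^{\tilde G}(f(m))$ by evaluating $f\circ\phi^g=\tilde\phi^{h(g)}\circ f$ at $m$. The only cosmetic difference is in the reverse inclusion, where the paper invokes the symmetry of coarse conjugacy and applies the forward argument to $(f^{-1},h^{-1})$, while you use the surjectivity of $h$ directly; these amount to the same thing, and your version is if anything slightly more self-contained.
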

\begin{proof}
Let $s\in f(O^G(m))$. Then $s=f(\phi^g(m))=(\tilde{\phi}^{h(g)}\circ f)(m)=\tilde{\phi}^{h(g)}(f(m))\in O^{\tilde{G}}(f(m))$. So $f(O^G(m))\subseteq O^{\tilde{G}}(f(m))$. Because coarse conjugate relation is an equivalence relation. One can easily check $f^{-1}(O^{\tilde{G}}(f(m))\subseteq O^G(m)$. Therefore $O^{\tilde{G}}(f(m)\subseteq f(O^G(m))$.
\end{proof}

\begin{definition}
If $\phi_1: M_1\rightarrow \tilde{M}_1$ and $\phi_2: M_2\rightarrow \tilde{M}_2$ are two maps, a map between disjoint unions can be defined as follows: $\phi_1\sqcup\phi_2: M_1\sqcup M_2\rightarrow \tilde{M}_1\sqcup \tilde{M}_2$ for every $i_j(m)\in M_1\sqcup M_2$, $\phi_1 \sqcup\phi_2(i_j(m)):=i_j(\phi_j(m))$ for $j=1, 2$.
\end{definition}
In the sequel, the members of the disjoint union of $M$ and $\tilde{M}$ are considered as follows: $M \sqcup\tilde{M}=\{\big((1,m) (2,\tilde{m})\big) : m\in M, \tilde{m}\in\tilde{M}\}$.
\begin{remark}
If $(G,\ast)$ and $(\tilde{G},\tilde{\ast})$ be two groups. Then $(G\sqcup \tilde{G},\ast\sqcup\tilde{\ast})$  is group
where $\big( (1,g_1) (2,\tilde{g}_1)\big)\ast\sqcup\tilde{\ast} \big( (1,g_2) (2,\tilde{g}_2)\big):= \big((1,g_1\ast g_2)(2,\tilde{g}_1\tilde{\ast}\tilde{g}_2)\big)$ for all \\
$\big( (1,g_1) (2,\tilde{g}_1)\big), \big( (1,g_2) (2,\tilde{g}_2)\big)\in G\sqcup \tilde{G}$.
Because $\big( (1,g_1) (2,\tilde{g}_1)\big)\ast\sqcup\tilde{\ast} \big( (1,g_2^{-1}) (2,\tilde{g}_2^{-1})\big)= \big((1,g_1\ast g_2^{-1})(2,\tilde{g}_1\tilde{\ast}\tilde{g}_2^{-1})\big)\in  G\sqcup \tilde{G}$.
\end{remark}

\begin{theorem}\label{n15}
Let $[(M,\mathcal{E}_M), \phi, (G, \mathcal{E}_G,\ast)]$ and  $[(\tilde{M},\mathcal{E}_{\tilde{M}}), \tilde{\phi}, (\tilde{G}, \mathcal{E}_{\tilde{G}},\tilde{\ast})]$
 be two coarse dynamical systems. Then $[(M \sqcup\tilde{M},\mathcal{E}), \phi \sqcup\tilde{\phi}, (G\sqcup \tilde{G},\ast\sqcup\tilde{\ast})]$ is a coarse dynamical system.
\end{theorem}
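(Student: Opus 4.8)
The plan is to verify directly that the triple $[(M \sqcup\tilde{M},\mathcal{E}), \phi \sqcup\tilde{\phi}, (G\sqcup \tilde{G},\ast\sqcup\tilde{\ast})]$ satisfies all the requirements of Definition \ref{n2}. We already know from the preceding remark that $(G\sqcup\tilde{G},\ast\sqcup\tilde{\ast})$ is a group, and we know $M\sqcup\tilde{M}$ carries the coproduct coarse structure $\mathcal{E}$ of Definition \ref{n14}; it remains to check that $G\sqcup\tilde{G}$ with the coproduct coarse structure is actually a coarse group (i.e.\ the coproduct coarse structure is a left group coarse structure), that each $(\phi\sqcup\tilde\phi)^{(1,g)(2,\tilde g)}$ is an asymorphism of $(M\sqcup\tilde M,\mathcal{E})$, and finally the two axioms $(i)$ and $(ii)$ of Definition \ref{n2}.

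First I would make precise the family of evolution operators: for $\mathbf{g}=\big((1,g)(2,\tilde g)\big)\in G\sqcup\tilde G$ set $(\phi\sqcup\tilde\phi)^{\mathbf g}:=\phi^{g}\sqcup\tilde\phi^{\tilde g}$, which by the definition of $\sqcup$ of maps sends $i_1(m)\mapsto i_1(\phi^g(m))$ and $i_2(\tilde m)\mapsto i_2(\tilde\phi^{\tilde g}(\tilde m))$. Since $\phi^g$ and $\tilde\phi^{\tilde g}$ are asymorphisms, the disjoint union map is a bijection; to see it is bornologous and effectively proper with respect to $\mathcal{E}$, I would unwind an entourage $(i_1\times i_1)(X)\cup(i_2\times i_2)(Y)\in\mathcal{E}$ with $X\in\mathcal{E}_M,Y\in\mathcal{E}_{\tilde M}$ and observe that its image under $(\phi^g\sqcup\tilde\phi^{\tilde g})\times(\phi^g\sqcup\tilde\phi^{\tilde g})$ is exactly $(i_1\times i_1)\big((\phi^g\times\phi^g)(X)\big)\cup(i_2\times i_2)\big((\tilde\phi^{\tilde g}\times\tilde\phi^{\tilde g})(Y)\big)$, which lies in $\mathcal{E}$ because $(\phi^g\times\phi^g)(X)\in\mathcal{E}_M$ and $(\tilde\phi^{\tilde g}\times\tilde\phi^{\tilde g})(Y)\in\mathcal{E}_{\tilde M}$; the same computation applied to $(\phi^g)^{-1}=\phi^{g^{-1}}$ and $(\tilde\phi^{\tilde g})^{-1}=\tilde\phi^{\tilde g^{-1}}$ gives effective properness, so $(\phi\sqcup\tilde\phi)^{\mathbf g}$ is an asymorphism.

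Next I would check the two axioms. For $(ii)$, the neutral element of $G\sqcup\tilde G$ is $\mathbf e=\big((1,e)(2,\tilde e)\big)$, and $(\phi\sqcup\tilde\phi)^{\mathbf e}=\phi^{e}\sqcup\tilde\phi^{\tilde e}=Id_M\sqcup Id_{\tilde M}=Id_{M\sqcup\tilde M}$, using axiom $(ii)$ of Definition \ref{n2} for each factor. For $(i)$, given $\mathbf g_1=\big((1,g_1)(2,\tilde g_1)\big)$ and $\mathbf g_2=\big((1,g_2)(2,\tilde g_2)\big)$ and a point $i_j(x)\in M\sqcup\tilde M$, I would compute $(\phi\sqcup\tilde\phi)^{\mathbf g_1}\circ(\phi\sqcup\tilde\phi)^{\mathbf g_2}$ on each summand; on the first summand this is $\phi^{g_1}\circ\phi^{g_2}$, which by Definition \ref{n2}$(i)$ equals $\phi^{g}$ for some $g\in g_1\ast g_2$, and likewise on the second summand it is $\tilde\phi^{\tilde g}$ for some $\tilde g\in\tilde g_1\tilde\ast\tilde g_2$; hence the composite equals $(\phi\sqcup\tilde\phi)^{\mathbf g}$ with $\mathbf g=\big((1,g)(2,\tilde g)\big)\in(g_1\ast g_2)\times(\tilde g_1\tilde\ast\tilde g_2)=\mathbf g_1\,(\ast\sqcup\tilde\ast)\,\mathbf g_2$ (where the product of subsets in $G\sqcup\tilde G$ is understood componentwise), which is precisely the required containment $(\phi\sqcup\tilde\phi)^{\mathbf g_1}\circ(\phi\sqcup\tilde\phi)^{\mathbf g_2}\in(\phi\sqcup\tilde\phi)^{\mathbf g_1\,(\ast\sqcup\tilde\ast)\,\mathbf g_2}(i_j(x))$.

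The main obstacle, and the step I would treat most carefully, is confirming that the coproduct coarse structure on $G\sqcup\tilde G$ is genuinely a (left) group coarse structure in the sense of Proposition \ref{n7}, so that $(G\sqcup\tilde G,\mathcal{E}_{G\sqcup\tilde G},\ast\sqcup\tilde\ast)$ qualifies as a coarse group at all; here I would use Proposition \ref{n9}, exhibiting the group ideal as $\mathcal I=\{E[\mathbf e]:E\in\mathcal E_{G\sqcup\tilde G}\}$ and checking that for an entourage $(i_1\times i_1)(X)\cup(i_2\times i_2)(Y)$ the left-translate family is uniformly bornologous, reducing componentwise to the fact that $\mathcal{E}_G$ and $\mathcal{E}_{\tilde G}$ already have that property. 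Everything else is a routine unwinding of the definitions of $\sqcup$ for maps and of the coproduct coarse structure, and I would present it compactly rather than belaboring each set-theoretic identity.
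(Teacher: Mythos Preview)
Your proposal is correct and follows the same direct-verification strategy as the paper: equip $M\sqcup\tilde M$ and $G\sqcup\tilde G$ with the coproduct coarse structure and then check the two axioms of Definition~\ref{n2} componentwise. The paper's proof is in fact sparser than yours: it simply cites Definition~\ref{n14} to furnish the coarse structures and then writes out the computations for axioms $(i)$ and $(ii)$, without separately verifying that each $(\phi\sqcup\tilde\phi)^{\mathbf g}$ is an asymorphism or that the coproduct coarse structure on $G\sqcup\tilde G$ is a left group coarse structure. Your extra care on those two points (especially the coarse-group verification you flag as the ``main obstacle'') fills gaps the paper leaves implicit, so your version is strictly more complete while remaining the same argument in spirit.
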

\begin{proof}
In view of Definition \ref{n14} both $M \sqcup\tilde{M}$ and $G\sqcup \tilde{G}$ are equipped by coarse structure. So we show two properties in Definition \ref{n2} are satisfied.\\
$(i)$ $(\phi^{g_1} \sqcup\tilde{\phi}^{\tilde{g}_1})\circ(\phi^{g_2}\sqcup\tilde{\phi}^{\tilde{g}_2})\big((1,m)(2,\tilde{m})\big)=(\phi^{g_1}\circ\phi^{g_2})(1,m)\sqcup(\tilde{\phi}^{\tilde{g}_1}\circ\tilde{\phi}^{\tilde{g}_2})(2,\tilde{m})\in\phi^{g_1\ast g_2}(1,m)\sqcup\tilde{\phi}^{\tilde{g}_1\tilde{\ast}\tilde{g}_2}(2,\tilde{m})$ , where
$\phi^{g_1\ast g_2}(1,m)\sqcup\tilde{\phi}^{\tilde{g}_1\tilde{\ast}\tilde{g}_2}(2,\tilde{m})=\{\big((1,\phi^s(m)) (2,\tilde{\phi}^t(\tilde{m}) )\big)\mid s\in g_1\ast g_2, t\in\tilde{g}_1\tilde{\ast}\tilde{g}_2\}$
if $g_1, g_2\in G$, $\tilde{g}_1$, $\tilde{g}_2\in \tilde{G}$, $m\in M$ and $\tilde{m}\in\tilde{M}$.\\
$(ii)$ $\phi^e\sqcup\tilde{\phi}^{\tilde{e}}\big( (1,m)(2,\tilde{m})  \big)=\big((1,\phi^e(m))(2,\tilde{\phi}^{\tilde{e}}(\tilde{m})\big)=\big((1,Id_M)(2,Id_{\tilde{M}})\big)=Id_M\sqcup Id_{\tilde{M}}$.
\end{proof}

\begin{theorem}\label{n16}
If $(f, h)$ be a coarse conjugate relation between $[(M,\mathcal{E}_M), \phi, (G, \mathcal{E}_G,\ast)]$ and  $[(\tilde{M},\mathcal{E}_{\tilde{M}}), \tilde{\phi}, (\tilde{G}, \mathcal{E}_{\tilde{G}},\tilde{\ast})]$, and let $(k, l)$ be a coarse conjugate relation between $[(\hat{M},\mathcal{E}_{\hat{M}}), \hat{\phi}, (\hat{G}, \mathcal{E}_{\hat{G}},\hat{\ast})]$ and  $[(\bar{M},\mathcal{E}_{\bar{M}}), \bar{\phi}, (\bar{G}, \mathcal{E}_{\bar{G}},\bar{\ast})]$, then two coarse dynamical systems $[M\sqcup \hat{M},\phi\sqcup\hat{\phi}, (G\sqcup \hat{G}, \ast\sqcup\hat{\ast})]$ and $[\tilde{M}\sqcup\bar{M}, \tilde{\phi}\sqcup\bar{\phi}, (\tilde{G}\sqcup\bar{G},\tilde{\ast}\sqcup\bar{\ast})]$ are $(f\sqcup k, h\sqcup l)$ coarse conjugate dynamical systems.
\end{theorem}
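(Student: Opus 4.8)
The plan is to apply Theorem \ref{n15} first, so that both $[M\sqcup \hat{M},\phi\sqcup\hat{\phi}, (G\sqcup \hat{G}, \ast\sqcup\hat{\ast})]$ and $[\tilde{M}\sqcup\bar{M}, \tilde{\phi}\sqcup\bar{\phi}, (\tilde{G}\sqcup\bar{G},\tilde{\ast}\sqcup\bar{\ast})]$ are already known to be coarse dynamical systems; it then remains only to verify that the pair $(f\sqcup k, h\sqcup l)$ satisfies the two requirements of Definition \ref{n1}, namely that both maps are asymorphisms for the coproduct structures, that $h\sqcup l$ respects the coproduct group operation, and that the conjugacy square commutes.

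First I would check that $f\sqcup k$ is an asymorphism of the coproduct coarse spaces. Since $f$ and $k$ are bijections, $f\sqcup k$ is a bijection with inverse $f^{-1}\sqcup k^{-1}$. For a basic entourage $(i_1\times i_1)(X)\cup(i_2\times i_2)(Y)$ of the coproduct structure $\mathcal{E}$ on $M\sqcup\hat{M}$, with $X\in\mathcal{E}_M$ and $Y\in\mathcal{E}_{\hat{M}}$, the relation $\phi_1\sqcup\phi_2(i_j(m))=i_j(\phi_j(m))$ gives the identity
\[
\big((f\sqcup k)\times(f\sqcup k)\big)\big((i_1\times i_1)(X)\cup(i_2\times i_2)(Y)\big)=(i_1\times i_1)\big((f\times f)(X)\big)\cup(i_2\times i_2)\big((k\times k)(Y)\big),
\]
and since $(f\times f)(X)\in\mathcal{E}_{\tilde{M}}$ and $(k\times k)(Y)\in\mathcal{E}_{\bar{M}}$ by bornologousness of $f$ and $k$, the right-hand side is a basic entourage of $\mathcal{E}$ on $\tilde{M}\sqcup\bar{M}$. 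The same computation with $f^{-1}$, $k^{-1}$ shows $f^{-1}\sqcup k^{-1}$ is bornologous, so $f\sqcup k$ is an asymorphism; running the identical argument with $h,l$ in place of $f,k$ shows $h\sqcup l$ is an asymorphism of the coproduct coarse groups.

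Next I would verify axiom $(i)$ of Definition \ref{n1} for $h\sqcup l$: applying $h\sqcup l$ to $\big((1,g)(2,\hat{g})\big)\,(\ast\sqcup\hat{\ast})\,\big((1,g')(2,\hat{g}')\big)=\big((1,g\ast g')(2,\hat{g}\hat{\ast}\hat{g}')\big)$ (the coproduct group operation from the preceding Remark) reduces the required equality, coordinate by coordinate, to $h(g\ast g')=h(g)\tilde{\ast}h(g')$ and $l(\hat{g}\hat{\ast}\hat{g}')=l(\hat{g})\bar{\ast}l(\hat{g}')$, which hold because $(f,h)$ and $(k,l)$ are coarse conjugacies. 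Finally, for axiom $(ii)$ I would evaluate $(f\sqcup k)\circ(\phi\sqcup\hat{\phi})^{x}$ and $(\tilde{\phi}\sqcup\bar{\phi})^{(h\sqcup l)(x)}\circ(f\sqcup k)$ on an arbitrary $i_j(m)\in M\sqcup\hat{M}$, with $x=\big((1,g)(2,\hat{g})\big)$: on the first summand this collapses to $f\circ\phi^{g}=\tilde{\phi}^{h(g)}\circ f$ and on the second to $k\circ\hat{\phi}^{\hat{g}}=\bar{\phi}^{l(\hat{g})}\circ k$, which are exactly axiom $(ii)$ for the two given conjugacies. I expect the only mildly delicate point to be the bookkeeping in the asymorphism step, i.e. checking that the image of a basic coproduct entourage is again a basic coproduct entourage, but this is precisely the displayed identity above; the rest is coordinatewise transport of the hypotheses.
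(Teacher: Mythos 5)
Your proof is correct and follows essentially the same route as the paper: verify that $f\sqcup k$ and $h\sqcup l$ are asymorphisms of the coproduct structures, that $h\sqcup l$ is a homomorphism for $\ast\sqcup\hat{\ast}$, and that the conjugacy square commutes summand by summand. The only difference is that you spell out the entourage-level computation for the asymorphism step, which the paper merely asserts; that added detail is welcome but does not change the argument.
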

\begin{proof}
Firstly, since $f$ and $k$ are asymorphisms. Then $f^{-1}\sqcup k^{-1}$ is bornologous map and $(f\sqcup k)\circ(f^{-1}\sqcup k^{-1})=(Id_M\sqcup Id_{\tilde{M}})$,  $(f^{-1}\sqcup k^{-1})\circ(f\sqcup k)=(Id_{\tilde{M}}\sqcup Id_M )$.\\ 
Secondly, it is clear $h\sqcup l$ is an asymorphism from $ G\sqcup\hat{G}$ to $\tilde{G}\sqcup\bar{G}$.\\
Thirdly, $h\sqcup l[\big((1,g_1)(2,\hat{g}_1)\big) \ast\sqcup\hat{\ast}\big((1,g_2)(2,\hat{g})\big)]=h(g_1)\tilde{\ast} h(g_2) \sqcup l(\hat{g}_2)\bar{\ast}l(\hat{g}_2)=h(g_1)\sqcup l(\hat{g}_1) \tilde{\ast}\sqcup\bar{\ast}h(g_2)\sqcup l(\hat{g}_2)=h\sqcup l[\big((1,g_1)(2,\hat{g}_1)\big)]\tilde{\ast}\sqcup\bar{\ast}h\sqcup l[\big((1,g_2),(2,\hat{g}_2)\big)]$.\\
Finally, to complete the proof we show the following diagram is commutative.
\begin{displaymath}
\xymatrix{
M\sqcup\hat{M} \ar[r]^{\phi^g\sqcup \hat{\phi}^{\hat{g}}} \ar[d]_{f\sqcup k} &
M\sqcup\hat{M} \ar[d]^{f\sqcup k} \\
\tilde{M}\sqcup\bar{M} \ar[r]_{\tilde{\phi}^{\tilde{g}}\sqcup \bar{\phi}^{\bar{g}}} & \tilde{M}\sqcup\bar{M} }
\end{displaymath}
So we have
$(f\sqcup k)\circ(\phi^g\sqcup \hat{\phi}^{\hat{g}})=(f\circ\phi^g)\sqcup(k\circ\hat{\phi}^{\hat{g}})=(\tilde{\phi}^{\tilde{g}}\circ f)\sqcup(\bar{\phi}^{\bar{g}}\circ k)=(\tilde{\phi}^{\tilde{g}}\sqcup \bar{\phi}^{\bar{g}})\circ(f\sqcup k)$.
\end{proof}
\begin{remark}
The Theorems \ref{n15} and \ref{n16} are satisfied in the case of considering product instead of coproduct of coarse dynamical systems.
We leave it to the reader to prove similar results.
\end{remark}

We are going to define a set-value coarse dynamical system. So some concepts of coarse structure on the power set of the coarse space $(M, \mathcal{E}_M)$ are provided.
The following definition tells us how to make a coarse structure on $P(M)$ i.e., the power set of the coarse space $(M, \mathcal{E}_M)$.
\begin{definition}\cite{k2}
Let $M$ be a set and $E$ be an entourge of $M$. Consider $E^*:=\{(K, L)\in P(M)\times P(M) \mid K\subseteq E[L], L\subseteq E[K]\}.$ For coarse space $(M,\mathcal{E}_M)$, the family $B_{\mathcal{E}_M}^*=\{X^* : X\in\mathcal{E}_M\}$ creates a base of coarse structure $\exp \mathcal{E}_M$ on $P(M)$ and $(P(M), \exp \mathcal{E}_M)$ is called coarse hyperspace.
\end{definition}
 \begin{proposition}\cite{k2}
 If $(M, \mathcal{E}_M)$ is a coarse space, then $\exp \mathcal{E}_M$ is a coarse structure.
\end{proposition}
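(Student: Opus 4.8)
The plan is to show that the family $B_{\mathcal{E}_M}^*=\{X^*: X\in\mathcal{E}_M\}$ is a base of a coarse structure on $P(M)$ in the sense recalled in Section~2; since $\exp\mathcal{E}_M=\mathcal{CL}(B_{\mathcal{E}_M}^*)$ is by definition the closure of this family under passing to subsets, axioms $(i)$--$(v)$ for $\exp\mathcal{E}_M$ then follow formally from the corresponding ``up to enlargement'' statements for the base. Thus I would verify four things: $\bigtriangleup_{P(M)}$ belongs to $B_{\mathcal{E}_M}^*$; for $X,Y\in\mathcal{E}_M$ there is $Z\in\mathcal{E}_M$ with $X^*\cup Y^*\subseteq Z^*$; each $X^*$ equals its own inverse relation; and for $X,Y\in\mathcal{E}_M$ there is $Z\in\mathcal{E}_M$ with $X^*\circ Y^*\subseteq Z^*$. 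The single computational identity I would rely on is $A[B[S]]=(B\circ A)[S]$ for $A,B\subseteq M\times M$ and $S\subseteq M$, which is immediate from the paper's convention $E\circ F=\{(m_1,m_2):\exists m_3,\ (m_1,m_3)\in E,\ (m_3,m_2)\in F\}$.

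For the diagonal, note that $\bigtriangleup_M[L]=L$ for every $L\subseteq M$, so $(\bigtriangleup_M)^*=\{(K,L): K\subseteq L \text{ and } L\subseteq K\}=\bigtriangleup_{P(M)}$, and $\bigtriangleup_M\in\mathcal{E}_M$ gives $\bigtriangleup_{P(M)}\in B_{\mathcal{E}_M}^*$. For unions, if $(K,L)\in X^*$ then $K\subseteq X[L]\subseteq (X\cup Y)[L]$ and $L\subseteq X[K]\subseteq (X\cup Y)[K]$, so $X^*\subseteq (X\cup Y)^*$ and, symmetrically, $Y^*\subseteq (X\cup Y)^*$; since $\mathcal{E}_M$ is closed under finite unions, $Z=X\cup Y$ works. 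For inverses, the defining clause ``$K\subseteq X[L]$ and $L\subseteq X[K]$'' is visibly symmetric in $K$ and $L$, whence $(X^*)^{-1}=X^*$ with no hypothesis on $X$.

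The step I expect to be the main obstacle is closure under composition. Suppose $(K,L)\in X^*\circ Y^*$, so there is $N\in P(M)$ with $(K,N)\in X^*$ and $(N,L)\in Y^*$; unwinding the definitions this means $K\subseteq X[N]$, $N\subseteq X[K]$, $N\subseteq Y[L]$, $L\subseteq Y[N]$. Applying the identity above, $K\subseteq X[N]\subseteq X[Y[L]]=(Y\circ X)[L]$ and $L\subseteq Y[N]\subseteq Y[X[K]]=(X\circ Y)[K]$. Now set $Z:=(X\circ Y)\cup(Y\circ X)$, which lies in $\mathcal{E}_M$ because $\mathcal{E}_M$ is closed under composition and under finite unions; then both inclusions upgrade to $K\subseteq Z[L]$ and $L\subseteq Z[K]$, i.e.\ $(K,L)\in Z^*$. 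Hence $X^*\circ Y^*\subseteq Z^*\in B_{\mathcal{E}_M}^*$.

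Finally I would assemble the axioms: an arbitrary $E\in\exp\mathcal{E}_M$ satisfies $E\subseteq X^*$ for some $X\in\mathcal{E}_M$, so closure under subsets is built into the definition; and given $E_1\subseteq X^*$ and $E_2\subseteq Y^*$ one has $E_1\cup E_2\subseteq (X\cup Y)^*$, $E_1^{-1}\subseteq (X^*)^{-1}=X^*$, and $E_1\circ E_2\subseteq X^*\circ Y^*\subseteq Z^*$ with $Z$ as above, each of which lies in $\exp\mathcal{E}_M$, while $\bigtriangleup_{P(M)}=(\bigtriangleup_M)^*\in\exp\mathcal{E}_M$. The only point that really requires care is getting the order of composition correct in $A[B[S]]=(B\circ A)[S]$ (and correspondingly choosing $Z$ to contain both $X\circ Y$ and $Y\circ X$); the rest is routine verification.
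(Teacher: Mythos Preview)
Your proof is correct. The paper itself does not supply a proof of this proposition; it merely cites it from \cite{k2}, so there is no in-paper argument to compare against. Your verification that $B_{\mathcal{E}_M}^*$ is a base---checking the diagonal, unions, symmetry, and composition---is the standard route, and the one delicate point (the order reversal in $A[B[S]]=(B\circ A)[S]$ under the paper's composition convention, handled by taking $Z=(X\circ Y)\cup(Y\circ X)$) is treated correctly.
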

\begin{proposition}\label{n17}\cite{k2}
Let $f: (M, \mathcal{E}_M) \rightarrow (\tilde{M},\mathcal{E}_{\tilde{M}})$ be a map between coarse spaces. Then\\
$(i)$ $f$ is bornologous if and only if $\exp f$ bornologous;\\
$(ii)$ $f$ effectively proper if and only if $\exp f$ effectively proper;\\
$(iii)$ $f$ an asymorphism if and only if $\exp f$ is an asymorphism;\\
$(iv)$ $f$ a coarse equivalence if and only if $\exp f$ is a coarse equivalence.
\end{proposition}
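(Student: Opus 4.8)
I would begin by recording that $\exp f\colon P(M)\to P(\tilde M)$ is the direct–image map $\exp f(K)=f(K)$, and that it suffices to test $\exp\mathcal{E}$ on the base entourages $X^{*}$, $X\in\mathcal{E}$. My plan is to prove the four equivalences in the order $(i),(ii),(iii),(iv)$, reducing $(iii)$ and $(iv)$ to $(i)$ and $(ii)$ together with two elementary transfer facts, and carrying the reverse implications of $(i)$ and $(ii)$ across the singleton embedding.

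First I would dispatch the ``forward'' implications of $(i)$ and $(ii)$ by direct bookkeeping with the definition of $(\cdot)^{*}$. If $f$ is bornologous and $(K,L)\in X^{*}$, then each $k\in K$ lies in $X[L]$, so some $l\in L$ has $(l,k)\in X$, whence $(f(l),f(k))\in (f\times f)(X)=:Y$ and $f(k)\in Y[f(L)]$; symmetrically $f(L)\subseteq Y[f(K)]$. Thus $(\exp f\times\exp f)(X^{*})\subseteq Y^{*}$ with $Y\in\mathcal{E}_{\tilde M}$, proving $\exp f$ bornologous. Running the same computation with $X:=(f\times f)^{-1}(Y)$ for $Y\in\mathcal{E}_{\tilde M}$ gives $(\exp f\times\exp f)^{-1}(Y^{*})\subseteq X^{*}\in\exp\mathcal{E}_M$, proving that $f$ effectively proper implies $\exp f$ effectively proper.

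For the converse implications of $(i)$ and $(ii)$ I would introduce the singleton map $\iota_M\colon M\to P(M)$, $\iota_M(m)=\{m\}$, and establish once and for all that it is an asymorphic embedding: it is injective, $(\iota_M\times\iota_M)^{-1}(X^{*})=X\cap X^{-1}\in\mathcal{E}_M$ (effective properness) and $(\iota_M\times\iota_M)(X)\subseteq (X\cup X^{-1})^{*}$ (bornologousness). Since $\exp$ is natural, $\exp f\circ\iota_M=\iota_{\tilde M}\circ f$, so bornologousness and effective properness of $f$ are obtained by composing $\exp f$ with $\iota_M$ and with $\iota_{\tilde M}^{-1}$ restricted to the image of $\iota_{\tilde M}$ (both of which are bornologous/effectively proper because $\iota_M,\iota_{\tilde M}$ are asymorphic embeddings). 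I expect this to be the main obstacle: one must keep the symmetrizations $X\cup X^{-1}$ — they cannot be dropped — and handle the empty set, since $(\varnothing,L)\in X^{*}$ forces $L=\varnothing$, so $\varnothing$ is an isolated point of the hyperspace; apart from that the argument is purely formal.

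Finally I would deduce $(iii)$ and $(iv)$. For $(iii)$: $f$ is bijective iff $\exp f$ is bijective, since a failure of injectivity of $f$ shows up already on two singletons and a failure of surjectivity on a singleton $\{\tilde m\}$ with $\tilde m\notin f(M)$; combining this with $(i),(ii)$ and the characterization $(iii_{2})$ yields the equivalence. For $(iv)$ I would use the characterization $(v_{2})$: a coarse equivalence is exactly a bornologous, effectively proper map with large image. By $(i)$ and $(ii)$ it then remains to prove that $f(M)$ is large in $\tilde M$ iff $\exp f(P(M))=P(f(M))$ is large in $(P(\tilde M),\exp\mathcal{E}_{\tilde M})$. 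If $Y[f(M)]=\tilde M$ with $Y$ symmetric, then for $L\subseteq\tilde M$ I pick for each $l\in L$ a point $a_{l}\in f(M)$ with $(a_{l},l)\in Y$ and set $K=\{a_{l}:l\in L\}\subseteq f(M)$, getting $(K,L)\in Y^{*}$; hence $P(f(M))$ is $Y^{*}$-dense in $P(\tilde M)$. Conversely, feeding the largeness of $P(f(M))$ the singletons $\{\tilde m\}$ produces a symmetric $Y\in\mathcal{E}_{\tilde M}$ with $Y[f(M)]=\tilde M$. This completes $(iv)$ and the proposition.
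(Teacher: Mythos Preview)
Your argument is correct. The paper itself does not supply a proof of this proposition; it merely quotes it from \cite{k2}, so there is no in-paper argument to compare against. Your approach --- verifying the forward implications of $(i)$ and $(ii)$ directly from the definition of $X^{*}$, pulling the converse implications back through the asymorphic embedding $\iota_M\colon m\mapsto\{m\}$ via the naturality $\exp f\circ\iota_M=\iota_{\tilde M}\circ f$, and then deducing $(iii)$ and $(iv)$ from the characterizations $(iii_2)$ and $(v_2)$ --- is exactly the standard route and is carried out cleanly. The only places that require care are precisely the two you flag: the symmetrization $X\cup X^{-1}$ when computing $(\iota_M\times\iota_M)(X)$ and $(\iota_M\times\iota_M)^{-1}(X^{*})$, and the isolation of $\varnothing$ in the hyperspace; both are handled correctly. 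One cosmetic remark: in the converse of the largeness step in $(iv)$ you do not actually need $Y$ to be symmetric, since from $(K,\{\tilde m\})\in Y^{*}$ the inclusion $\{\tilde m\}\subseteq Y[K]\subseteq Y[f(M)]$ already gives what you want.
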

\begin{remark}
Due to Proposition \ref{n17}. In Definition \ref{n2}  the asymorphism mapping $\phi^g: (M, \mathcal{E}_M)\rightarrow  (M, \mathcal{E}_M)$ between coarse spaces induces the asymorphism mapping $\exp \phi^g: (P(M), \exp \mathcal{E}_M)\rightarrow (P(M), \exp \mathcal{E}_M)$. So we can define the set-value coarse dynamical system.
\end{remark}

\begin{definition}
Consider a coarse dynamical system $[(M,\mathcal{E}_M), \phi, (G,\mathcal{E}_G,\ast)]$. \\A set-value coarse dynamical system with caorse time which is induced by\\
 $[(M,\mathcal{E}_M), \phi, (G,\mathcal{E}_G,\ast)]$ is a triple $[(P(M),\exp \mathcal{E}_M), \exp\phi, (G,\mathcal{E}_G,\ast)]$ with the following properties:
\\
$(i)$ If $g_1, g_2 \in G$, and $N\in P(M)$, then $\exp\phi^{g_1}\circ\exp\phi^{g_2}\in \exp\phi^{g_1\ast g_2} (N)$, where $\exp\phi^{g_1\ast g_2}(N)=\{ \exp\phi^g(N): g\in g_1\ast g_2\}$; \\
$(ii)$ $\exp\phi^e=Id_{P(M)}$ where $e$ is the identity element of $G$.
\end{definition}
The mapping $\exp\phi^g: P(M)\rightarrow P(M)$ is called a set-value coarse evolution operator.

\begin{theorem}
Let two coarse dynamical systems $[(M,\mathcal{E}_M), \phi, (G, \mathcal{E}_G,\ast)]$ and  \\$[(\tilde{M},\mathcal{E}_{\tilde{M}}), \tilde{\phi}, (\tilde{G}, \mathcal{E}_{\tilde{G}},\tilde{\ast})]$ be $(f,h)$-coarse conjugate.
Then $[(P(M),\exp \mathcal{E}_M), \phi, (G, \mathcal{E}_G,\ast)]$ and $[(P(\tilde{M}),\exp\mathcal{E}_{\tilde{M}}), \tilde{\phi}, (\tilde{G}, \mathcal{E}_{\tilde{G}},\tilde{\ast})]$ are $(\exp f, h)$ coarse conjugate.
\end{theorem}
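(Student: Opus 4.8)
The plan is to verify directly the two axioms of Definition \ref{n1} for the pair $(\exp f, h)$, taking as phase spaces the hyperspaces $(P(M),\exp\mathcal{E}_M)$ and $(P(\tilde M),\exp\mathcal{E}_{\tilde M})$ and keeping the same time groups $(G,\mathcal{E}_G,\ast)$ and $(\tilde G,\mathcal{E}_{\tilde G},\tilde\ast)$ as in the given conjugacy. Thus the argument splits into three checks: that $\exp f$ is an asymorphism of hyperspaces; that $h$ is still an asymorphism satisfying the homomorphism axiom $(i)$; and that the conjugacy square $(ii)$ commutes at the level of power sets. Throughout, the evolution operators of the two set-valued systems are $\exp\phi=\{\exp\phi^{g}\}_{g\in G}$ and $\exp\tilde\phi=\{\exp\tilde\phi^{\tilde g}\}_{\tilde g\in\tilde G}$, whose status as coarse dynamical systems was recorded in the remark preceding the definition of a set-value coarse dynamical system.

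The first check is immediate from Proposition \ref{n17}$(iii)$: since $f\colon (M,\mathcal{E}_M)\to(\tilde M,\mathcal{E}_{\tilde M})$ is an asymorphism, so is $\exp f\colon (P(M),\exp\mathcal{E}_M)\to(P(\tilde M),\exp\mathcal{E}_{\tilde M})$. The second check needs nothing new: the time group, its coarse structure, and the map $h$ are untouched when passing to the set-valued systems, so $h$ remains an asymorphism and the identity $h(g_1\ast g_2)=h(g_1)\tilde\ast h(g_2)$ holds verbatim, giving axiom $(i)$.

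For the third check I would apply the forward-image operator $\exp(\cdot)$ to the identity $f\circ\phi^{g}=\tilde\phi^{h(g)}\circ f$, which is axiom $(ii)$ of the given conjugacy and holds for every $g\in G$. Since $\exp$ is functorial — for maps $\psi_1,\psi_2$ between sets one has $\exp(\psi_1\circ\psi_2)=\exp\psi_1\circ\exp\psi_2$, an immediate consequence of $(\psi_1\circ\psi_2)(K)=\psi_1(\psi_2(K))$ for $K\subseteq M$ — applying it to both sides of the identity yields $\exp f\circ\exp\phi^{g}=\exp\tilde\phi^{h(g)}\circ\exp f$ for all $g\in G$. This is exactly axiom $(ii)$ for the pair $(\exp f,h)$ and makes the corresponding square over $P(M)$ and $P(\tilde M)$ commute, which completes the verification.

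I do not expect a genuine obstacle: the whole proof is the transport of Definition \ref{n1} through the hyperspace functor, and the two facts it rests on — that $\exp$ carries asymorphisms to asymorphisms (Proposition \ref{n17}) and that it respects composition (a one-line check) — are already available. The only point requiring care is bookkeeping: the target systems in the statement must be read with evolution operators $\exp\phi^{g}$ and $\exp\tilde\phi^{h(g)}$ rather than $\phi^{g}$ and $\tilde\phi^{h(g)}$, so that the diagram being filled in is the one living over the power sets.
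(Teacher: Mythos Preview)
Your proposal is correct and follows essentially the same approach as the paper: invoke Proposition \ref{n17} to lift $f$ (and the evolution maps) to asymorphisms on the hyperspaces, and use functoriality of $\exp$ to transport the conjugacy identity $f\circ\phi^{g}=\tilde\phi^{h(g)}\circ f$ to $\exp f\circ\exp\phi^{g}=\exp\tilde\phi^{h(g)}\circ\exp f$. Your version is slightly more explicit about the role of $h$ and the functoriality check, but the argument is the same.
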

\begin{proof}
In view of Proposition \ref{n17} the map $f$ is an asymorphism if and only if $\exp f$ is an asymorphism and $\phi^g, \tilde{\phi}^{h(g)}$ are asymorphism if and only if $\exp \phi^g, \exp \tilde{\phi}^{h(g)}$ are asymorphism.
It is clear that  $f\circ \phi^g=\tilde{\phi}^{h(g)}\circ f$ if and only if $\exp f\circ \exp\phi^g=\exp\tilde{\phi}^{h(g)}\circ \exp f$ for all $g\in G$.
\end{proof}



\begin{thebibliography}{99}

%
\bibitem {k21} D. Dikranjan, I. Protasov, N. Zava, Hyperballeans of groups, Top. Appl., 263 (2019) 172-198.
\bibitem {k13} D. Dikranjan and N. Zava, Categories of coarse groups: Quasi-homomorphisms and functorial coarse structures, Topology and its Applications 273 (2020) 1-31.
%
\bibitem {k14} D. Dikranjan and N. Zava, The impact of Pontryagin and Bohr functors on large-scale properties of locally compact abelian groups, Topology and its Applications 275 (2020) 1-36.
%
\bibitem {k15} D. Dikranjan, N. Zava, Preservation and reflection of size properties of balleans, Top. Appl. 221 (2017) 570-595.
%
\bibitem {k20} D. Dikranjan, I. Protasov, K. Protasova, N. Zava, Balleans, hyperballeans and ideals,
Appl. Gen. Topol. 20, no. 2 (2019) 431-447.
%
\bibitem {k18} A. Dranishnikov, J. Smith, Asymptotic dimension of discrete groups. Fund. Math. 189
(1) (2006) 27-34.
%
%
\bibitem {k17} M. Gromov, Asymptotic invariants of infinite groups, Geometric group theory, Vol. 2 (Sussex, 1991), 1-295, London Math. Soc. Lecture Note Ser., 182, Cambridge Univ. Press, Cambridge, 1993.
%
\bibitem {k6} G. A. Margulis, The isometry of closed manifolds of constant negative curvature with
the same fundamental group. Dokl. Akad. Nauk SSSR 192 (1970) 736–737; English
transl. Soviet Math. Dokl. 11 (1970) 722–723.
%
\bibitem {k7} G. A. Margulis, Discrete subgroups of semisimple Lie groups. Ergeb. Math. Grenzgeb.
(3) 17, Springer-Verlag, Berlin 1991.
%
\bibitem {k9} J. Milnor, A note on curvature and fundamental group. J. Differential Geometry 2
(1968) 1–7.
%


\bibitem {mol} M.R. Molaei  and F. Iranmanesh, Conjugacy in hyper semi-dynamical
systems, General Mathematics. 12 (4) (2004), 11–16.

\bibitem {k4} G. D. Mostow, Quasi-conformal mappings in n-space and the rigidity of hyperbolic
space forms. Inst. Hautes Études Sci. Publ. Math. 34 (1968) 53–104.
%
\bibitem {k5} G. D. Mostow, Strong rigidity of locally symmetric spaces. Annals of Math. Stud. 78,
Princeton University Press, Princeton, N.J., 1973.
%
\bibitem {k16} A. Nicas, D. Rosenthal, Coarse structures on groups, Topol. Appl. 159 (14) (2012) 3215–3228.
%
\bibitem {k12} P. W. Nowak, G. Yu, Large Scale Geometry. European Mathematical Society, 2012.
%
\bibitem {k19} I.V. Protasov, O.I. Protasova, Sketch of group balleans, Mat. Stud. 22(1) (2004) 10–20.
%
\bibitem {k11} J. Roe, Lectures on Coarse Geometry, Univ. Lecture Ser., vol. 31, American Mathematical
Society, Providence RI, 2003.
%
%
\bibitem {k8} A. S. Švarc,A volume invariant of coverings (Russian). Dokl. Akad. Nauk SSSR (N.S.)
105 (1955) 32–34.
%
%
%
\bibitem {k10} J.A.Wolf, Growth of finitely generated solvable groups and curvature of Riemanniann
manifolds. J. Differential Geom. 2 (1968) 421–446.
%
%
 \bibitem {k1} N. Zava, Generalisations of coarse spaces, Topology and its applications 263, 230–256, 2019.
%
\bibitem {k2} N. Zava, An introduction to coarse hyperspaces, RIMS Kôkyûroku, on volume 2110 (2019) 35-49, Advances in general topology and their problems ( conference paper).
%
\bibitem {k24} N. Zava, Generalisations of coarse spaces, Top. Appl,  263 (2019) 230-256.

 \end{thebibliography}
\end{document}